\definecolor{darkergreen}{rgb}{0.0, 0.5, 0.0}
\numberwithin{equation}{section}
\newcommand{\be}{\begin{eqnarray}}
	\newcommand{\ee}{\end{eqnarray}}
\newcommand{\ce}{\begin{eqnarray*}}
	\newcommand{\de}{\end{eqnarray*}}
\newtheorem{theorem}{Theorem}[section]
\newtheorem{lemma}[theorem]{Lemma}
\newtheorem{proposition}[theorem]{Proposition}
\newtheorem{Examples}[theorem]{Example}
\newtheorem{corollary}[theorem]{Corollary}
\newtheorem{definition}[theorem]{Definition}
\theoremstyle{definition}
\newtheorem{remark}[theorem]{Remark}
\def\eps{\varepsilon}
\def\p{\partial}
\def\<{{\langle}}
\def\>{{\rangle}}
\def\({{\Big(}}
\def\){{\Big)}}
\def\bx{{\mathbf{x}}}
\def\dif{{\mathord{{\rm d}}}}
\def\no{\nonumber}
\def\={&\!\!=\!\!&}
\def\mR{{\mathbb R}}
\def\mS{{\mathbb S}}
\def\mT{{\mathbb T}}
\def\1{{\mathbf{1}}}
\def\E{\mathbf E}
\def\geq{\geqslant}
\def\leq{\leqslant}
\def\div{\mathord{{\rm div}}}
\def\eps{\varepsilon}
\def\p{\partial}
\def\<{{\langle}}
\def\>{{\rangle}}
\def\({{\Big(}}
\def\){{\Big)}}
\def\bx{{\mathbf{x}}}
\def\dif{{\mathord{{\rm d}}}}
\def\no{\nonumber}
\def\={&\!\!=\!\!&}
\def\bt{\begin{theorem}}
	\def\et{\end{theorem}}
\def\bl{\begin{lemma}}
	\def\el{\end{lemma}}
\def\br{\begin{remark}}
	\def\er{\end{remark}}
\def\bx{\begin{Examples}}
	\def\ex{\end{Examples}}
\def\bd{\begin{definition}}
	\def\ed{\end{definition}}
\def\bp{\begin{proposition}}
	\def\ep{\end{proposition}}
\def\bc{\begin{corollary}}
	\def\ec{\end{corollary}}
\def\geq{\geqslant}
\def\leq{\leqslant}
\def\div{\mathord{{\rm div}}}
\def\<{\langle} \def\>{\rangle}
\begin{document}
\title{Kolmogorov {$4/5$} law for the forced 3D Navier--Stokes equations}

\author{Martina Hofmanov\'a}
\address[M. Hofmanov\'a]{Fakult\"at f\"ur Mathematik, Universit\"at Bielefeld, D-33501 Bielefeld, Germany}
\email{hofmanova@math.uni-bielefeld.de}

\author{Umberto Pappalettera}
\address[U. Pappalettera]{Fakult\"at f\"ur Mathematik, Universit\"at Bielefeld, D-33501 Bielefeld, Germany}
\email{upappale@math.uni-bielefeld.de}

\author{Rongchan Zhu}
\address[R. Zhu]{Department of Mathematics, Beijing Institute of Technology, Beijing 100081, China}
\email{zhurongchan@126.com}

\author{Xiangchan Zhu}
\address[X. Zhu]{ Academy of Mathematics and Systems Science,
Chinese Academy of Sciences, Beijing 100190, China}
\email{zhuxiangchan@126.com}

\thanks{
R.Z. and X.Z. are grateful for
the financial supports   by National Key R\&D Program of China (No. 2022YFA1006300).
R.Z. gratefully acknowledges financial support from the NSFC (No.
11922103, 12271030).
  X.Z. is grateful for the financial supports  in part by National Key R\&D Program of China (No. 2020YFA0712700) and the NSFC (No.   12090014, 12288201) and
  the support by key Lab of Random Complex Structures and Data Science,
 Youth Innovation Promotion Association (2020003), Chinese Academy of Science.
The research of  M.H. and U.P. was funded by the European Research Council (ERC) under the European Union's Horizon 2020 research and innovation programme (grant agreement No. 949981). The research of M.H., R.Z. and X.Z.  was funded by the Deutsche Forschungsgemeinschaft (DFG, German Research Foundation) -- SFB 1283/2 2021 -- 317210226.
}

\begin{abstract}
We prove that the solutions to the 3D forced Navier--Stokes equations constructed  by Bru\`e, Colombo, Crippa, De~Lellis, Sorella in \cite{BCCDLS22} satisfy an $L^p$-in-time version of the celebrated  Kolmogorov  4/5 law for behavior of the  averaged third order longitudinal structure function along the vanishing viscosity limit. {The result has a natural probabilistic interpretation: the predicted behavior is observed on average after waiting for some sufficiently generic random time.} This is then applied to derive a bound for the exponent of the third order absolute structure function in accordance with the Kolmogorov turbulence theory.
 \end{abstract}

\date{\today}

\maketitle

\tableofcontents
	\section{Introduction}

The celebrated Kolmogorov 1941 theory of turbulence \cite{Kol41a, Kol41b, Kol41c} (see also \cite{Fr95,BV19} for reviews) predicts various universal behaviors of statistics of fully developed turbulence. On the mathematical side, the problem in the context of Navier--Stokes equations  is very delicate and the rigorous understanding of these predictions  is still in its infancy.
Recently, partial results were obtained in \cite{BDL23,BCCDLS22} for  the forced Navier--Stokes system on $[0,1]\times \mT^3$ for incompressible fluids
	\begin{align}
	\label{eq:v}
		\aligned
		\p_t v_\nu+v_\nu\cdot\nabla v_\nu+\nabla p_\nu&=\nu \Delta v_\nu+f_\nu,
		\qquad\div v_\nu=0,\\
		v_{\nu}(0)&=v_{in}.
		\endaligned
	\end{align}
Here $v_{\nu}:[0,1]\times\mT^{3}\to \mR^{3}$ is the velocity field, $p_\nu:[0,1]\times\mT^{3}\to \mR$ the associated pressure, $\nu>0$ is the  kinematic viscosity of the fluid and $f_\nu: [0,1]\times \mT^3\to\mR^3$ is an external force.

The works \cite{BDL23,BCCDLS22} successfully constructed forces $f_{\nu}$ and initial data $v_{in}$ so that the associated solutions $v_{\nu}$ to \eqref{eq:v} satisfy the anomalous dissipation of energy, i.e. solutions $v_{\nu}$ do not preserve their kinetic energy at time $t=1$ in the vanishing viscosity limit.
This result can be seen as a  fixed-time version of the so-called zeroth law of turbulence, which is one of the basic predictions of the  Kolmogorov turbulence theory.
The form of the anomalous dissipation established in  \cite{BDL23,BCCDLS22} 
 reads as
\begin{equation}\label{eq:h}
\varepsilon:=\limsup_{\nu\to 0}\nu \int_{0}^{1}\|\nabla v_{\nu}\|_{L^{2}}^{2}\dif  t > 0,
\end{equation}
and some variant of this result is usually necessary in order to derive further laws of the Kolmogorov theory, in particular the  $4/5$ law and the closely related $4/3$ law, see \cite{Fr95, BCZPSW19}.

The $4/5$ law is an exact relation for the third order longitudinal structure function, i.e., the average of the  cube of the longitudinal velocity increment
given for $\ell>0$, $t\in[0,1]$ by
\begin{align}\label{eq:s1}
	S_{\|}^\nu(t,\ell)&:=\frac1{4\pi}\int_{\mS^2}\int_{\mT^3}(\delta_{\ell\hat{n}} v_\nu(t,x)\cdot \hat{n})^3\dif x\dif S(\hat{n}),
\end{align}
where
$$
\delta_h v_\nu(x)=v_\nu(x+h)-v_\nu(x),\quad h\in\mR^3.
$$
The $4/3$ law then describes the behavior of the averaged structure function
\begin{align}\label{eq:s2}
	S_0^\nu(t,\ell)&:=\frac1{4\pi}\int_{\mS^2}\int_{\mT^3}|\delta_{\ell\hat{n}} v_\nu(t,x)|^2\delta_{\ell\hat{n}} v_\nu(t,x)\cdot \hat{n}\dif x\dif S(\hat n).
\end{align}
Heuristically, the laws read respectively  as
$$
\int_0^1 S_{\|}^\nu(t,\ell) \dif t\thicksim -\frac45 \varepsilon\ell\qquad \mbox{and}\qquad
\int_0^1 S_{0}^\nu(t,\ell) \dif t \thicksim -\frac43 \varepsilon\ell
$$
over a range of scales $\ell\in[\ell_D,\ell_I]$.
The interval $[\ell_D,\ell_I]$ is commonly called the inertial range. The top scale $\ell_I$, called the integral scale, is essentially the scale at which the external forcing acts and injects energy while $\ell_D$, called the dissipation scale, is the scale where the viscosity dominates the flow and the energy is dissipated.

The Kolmogorov theory of turbulence was developed under the assumptions of homogeneity, isotropy as well as self-similarity. 
 The assumption of homogeneity and isotropy can be conveniently replaced by averaging over the spatial variable $x$ and the direction $\hat{n}$ as we did in \eqref{eq:s1}, \eqref{eq:s2}. Self-similarity is not supported by experimental studies due to what is now commonly referred to as intermittency (see \cite{BV19} for a review). Nevertheless, the Kolmogorov $4/5$ law matches very well with experiments. In fact, it is regarded as an exact law of turbulence in the physics community. However,  to the best of our knowledge, up to now  there are no known examples where this law was mathematically rigorously proved in the framework of the Navier--Stokes equations. The validity of alike universal laws for any simplified model of turbulence is a strong physical desideratum, which can be of help in ruling out non physical and artificial phenomenological models.

We  mention that, under a weak anomalous dissipation assumption \cite{BCZPSW19} proved both the $4/5$ and the $4/3$ law for  statistically stationary martingale solutions to the 3D  Navier--Stokes equations with an additive noise. But it remains an open question whether such statistically stationary solutions exist (see \cite[Remark 1.6]{BCZPSW19}). We also refer the readers to  \cite{NT99} which considers deterministic, smooth solutions with a stronger hypothesis.

In this paper, we show that the solutions constructed in \cite{BCCDLS22} satisfy an $L^{p}$-version of both these laws, supporting further their relevance to the study of turbulence. Our proof is inspired by the presentation in \cite{BCZPSW19}. In the first step, we derive the so-called K\'arm\'an--Howarth--Monin relation, which for a particular choice of test functions reduces to expressions for the structure functions $S^{\nu}_{\|}$ and $S^{\nu}_{0}$. The main difference to \cite{BCZPSW19} now lies in the fact that we cannot use stationarity to cancel out certain terms and  need to control them.
 This requires some uniform in $\nu$ regularity of the solutions $v_{\nu}$ and a uniform in $\nu$ integrability of the forces $f_{\nu}$.

To be more precise, we work under the general assumption

(\textbf{H}) Assume that $v_\nu$ is a Leray--Hopf solution to \eqref{eq:v} on $[0,1]\times \mT^3$
 such that there exist $\sigma>0$, $\alpha\in(0,1)$ and $q\in[1,2]$ such that $$\sup_{\nu\in(0,1)}\Big(\|v_\nu\|_{L^\infty(0,1;L^2)}+\|v_\nu\|_{L^{q}(0,1;H^\alpha)}+\| f_\nu\|_{ L^{1+\sigma}(0,1;L^2)}\Big)<\infty.$$
Here the upper bound $1$ for $\nu$ could be changed to another small fixed constant.
\noindent We recall that Leray--Hopf solutions are weak solutions satisfying an  energy inequality, see Definition~\ref{def:sol}. With this at hand, we obtain the following $L^{p}$-version of the Kolmogorov $4/5$ and $4/3$ law, {which is established via Theorem~\ref{th:34} and Theorem~\ref{45}.} As a matter of fact, even a stronger uniform estimate than (\textbf{H}) holds true for the solutions constructed in \cite{BCCDLS22}, namely they possess the Onsager critical regularity $L^{3}(0,1;C^{1/3-})$, but this is not needed in the proof of the result.

\bt\label{th:main}
Let $v_{\nu}$, $\nu\in (0,1)$, be  solutions to the forced Navier--Stokes system \eqref{eq:v} satisfying the hypothesis {\em (\textbf{H})}. There exists $\ell_D=\nu^K$ with $K<\frac1{2-\alpha q}$ such that  for any $p\in[1,\infty)$
\begin{align*}
	\lim_{\ell_I\to0}\limsup_{\nu\to0}\sup_{\ell\in [\ell_D,\ell_I]}\left\|\int_0^\cdot\frac{S_{\|}^\nu(r,\ell)}{\ell}\dif r+\frac45\eps_\nu(\cdot)\right\|_{L^p(0,1)}=0,
\end{align*}
and
\begin{align*}
	\lim_{\ell_I\to0}\limsup_{\nu\to0}\sup_{\ell\in [\ell_D,\ell_I]}\left\|\int_0^\cdot\frac{S_0^\nu(r,\ell)}{\ell}\dif r+\frac43\eps_\nu(\cdot)\right\|_{L^p(0,1)}=0,
\end{align*}
where
\begin{align}\label{eq:eps}
\varepsilon_{\nu}(t) :=\frac12\|v_\nu(0)\|_{L^2}^2-\frac12\|v_\nu(t)\|_{L^2}^2
+\int_0^t\<f_\nu,v_\nu\>\dif r.
\end{align}
If additionally the energy equality holds true then
$$
\varepsilon_{\nu}(t) = \nu  \int_{0}^{t}\|\nabla v_{\nu}(s)\|_{L^{2}}^{2}\dif s.
$$

\et

\br (i) When $q=2$, $\alpha=1/3-$ (the solutions constructed in \cite{BCCDLS22} satisfy this) then $K=3/4-$ which nearly coincides with the usual K$41$ dissipative length scale $\ell_D(\nu)\sim \nu^{3/4}$. 

(ii) In the first version of this paper, we proved our result in the case $q=1$ only, with suboptimal dissipative length scale $\ell_D\sim \nu^{1/2-}$. After our paper first appeared on arXiv, the author in \cite{No23} improved our dissipative length scale by establishing a number of new energy balance identities. Inspired by his work and the work \cite{GK23}, we also improve the dissipative length scale by a modification of our original proof which follows the classical K\'{a}rm\'{a}n--Howarth--Monin relation as in \cite{BCZPSW19}. We stress here that the regularity condition is also related to the asymptotic behavior of the absolute structure function (see Theorem \ref{thm:main2} below) and Onsager's conjecture.
	
(iii) For the stochastic case in \cite{BCZPSW19}, if we replace the weak anomalous dissipation condition there by the following regularity condition,
	$$\sup_{\nu\in(0,1)}\E\|v_\nu\|_{H^{1/3-}}^2<\infty,$$
	then the Kolmogorov $4/5$ law holds with the dissipative length scale $\ell_D\sim \nu^{3/4-}$.
\er

As the anomalous dissipation is obtained in the time averaged sense, cf. \eqref{eq:h}, it would be natural to average also in time for the $4/5$ and the $4/3$ law, i.e. to integrate $S^{\nu}_{\|}(\cdot,\ell)$ and $S^{\nu}_{0}(\cdot,\ell)$ with respect to $t\in [0,1]$. However, we do not obtain the desired Kolmogorov laws in this case, because the solutions from \cite{BCCDLS22} may   not have a uniform in $\nu$ regularity at time $1$, which would be needed in the proof.   By considering  the $L^{p}$-version of Kolmogorov's $4/5$ and $4/3$ laws instead, we overcome this problem and a uniform in $\nu$ regularity in $L^{1}$ with respect to time as formulated in the hypothesis (\textbf{H}) is sufficient.

{ Furthermore, the result of Theorem~\ref{th:main} has an immediate probabilistic interpretation:
 the predicted behavior is captured  on average after waiting for some random time (not necessarily large) under the condition that this random time is sufficiently ``spread'' across time interval $[0,1]$. Precisely, the result is formulated as follows  {and the proof
is discussed at the end of Section~\ref{s:2}.}}

\begin{theorem}\label{th:1.2}
Let $v_{\nu}$, $\nu\in (0,1)$, be  solutions to the forced Navier--Stokes system \eqref{eq:v} satisfying the hypothesis {\em (\textbf{H})}. There exists $\ell_D=\nu^K$ with $K<\frac1{2-\alpha q}$ such that  for every $p\in[1,\infty)$, $\kappa>0$ and $K>0$
\begin{align*}
	\lim_{\ell_I\to0}\limsup_{\nu\to0}\sup_{\ell\in [\ell_D,\ell_I]} \sup_{\mathfrak{t}}\left\langle\left|\int_0^\mathfrak{t}\frac{S_{\|}^\nu(r,\ell)}{\ell}\dif r+\frac45\eps_\nu(\mathfrak{t})\right|^{p}\right\rangle=0,
\end{align*}
and
\begin{align*}
	\lim_{\ell_I\to0}\limsup_{\nu\to0}\sup_{\ell\in [\ell_D,\ell_I]} \sup_{\mathfrak{t}}\left\langle\left|\int_0^\mathfrak{t}\frac{S_0^\nu(r,\ell)}{\ell}\dif r+\frac43\eps_\nu(\mathfrak{t})\right|^{p}\right\rangle=0.
\end{align*}
Here
$
\varepsilon_{\nu}$ was defined in \eqref{eq:eps}, $\mathfrak{t}$ are arbitrary random times taking values in $[0,1]$ whose law is absolutely continuous with respect to the Lebesgue measure with a density $\psi$ satisfying $\|\psi\|_{L^{1+\kappa}(0,1)}\leq K$ and the bracket $\langle\cdot\rangle$ denotes the ensemble average.
\end{theorem}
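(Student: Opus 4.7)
The plan is to reduce the probabilistic statement directly to the deterministic $L^{p}$-in-time bound of Theorem~\ref{th:main} by a single application of Hölder's inequality on the law of the random time. For any admissible random time $\mathfrak{t}$ with density $\psi$ on $[0,1]$, the ensemble average unfolds into an ordinary integral against $\psi$:
\begin{equation*}
\left\langle\left|\int_0^\mathfrak{t}\frac{S_{\|}^\nu(r,\ell)}{\ell}\dif r+\frac45\eps_\nu(\mathfrak{t})\right|^{p}\right\rangle=\int_0^1|F_\nu(t,\ell)|^{p}\,\psi(t)\dif t,
\end{equation*}
where I abbreviate $F_\nu(t,\ell):=\int_0^t S^\nu_{\|}(r,\ell)/\ell\,\dif r+\frac45\eps_\nu(t)$. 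An analogous identity holds for the $4/3$-law quantity built from $S_0^\nu$, so I may concentrate on $F_\nu$.

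Next, I would apply Hölder's inequality in the time variable with conjugate exponents $1+\kappa$ and $(1+\kappa)/\kappa$, using the hypothesis $\|\psi\|_{L^{1+\kappa}(0,1)}\le K$. Setting $p':=p(1+\kappa)/\kappa$, this gives
\begin{equation*}
\int_0^1|F_\nu(t,\ell)|^p\,\psi(t)\dif t\le \|\psi\|_{L^{1+\kappa}(0,1)}\,\|F_\nu(\cdot,\ell)\|_{L^{p'}(0,1)}^p\le K\,\|F_\nu(\cdot,\ell)\|_{L^{p'}(0,1)}^p.
\end{equation*}
Because the resulting right-hand side depends on $\mathfrak{t}$ only through the uniform constant $K$, taking the supremum over all admissible random times preserves the bound:
\begin{equation*}
\sup_{\mathfrak{t}}\left\langle|F_\nu(\mathfrak{t},\ell)|^{p}\right\rangle\le K\,\|F_\nu(\cdot,\ell)\|_{L^{p'}(0,1)}^p.
\end{equation*}

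Since $\kappa>0$ guarantees $p'\in[1,\infty)$, it remains to invoke Theorem~\ref{th:main} with the exponent $p'$ in place of $p$, which delivers precisely
\begin{equation*}
\lim_{\ell_I\to 0}\limsup_{\nu\to 0}\sup_{\ell\in[\ell_D,\ell_I]}\|F_\nu(\cdot,\ell)\|_{L^{p'}(0,1)}=0,
\end{equation*}
and hence the first claim of Theorem~\ref{th:1.2}. The same argument applied verbatim to the $S_0^\nu$-quantity, with Theorem~\ref{th:main} providing the companion $4/3$ convergence, yields the second claim. I do not anticipate a genuine obstacle here: the proof is essentially a duality maneuver between $L^{1+\kappa}$ densities and $L^{(1+\kappa)/\kappa}$ integrands, and the only point worth monitoring is that the promoted exponent $p'$ remains finite, which is ensured by the assumption $\kappa>0$ so that Theorem~\ref{th:main} is directly applicable.
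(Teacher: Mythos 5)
Your proposal is correct and is essentially the paper's own argument: the paper likewise unfolds the ensemble average into $\int_0^1\psi(t)|F_\nu(t,\ell)|^p\,\dif t$ and concludes by H\"older's inequality (with exponents $1+\kappa$ and $(1+\kappa)/\kappa$) together with the $L^{p'}$-in-time convergence of Theorem~\ref{th:main}; you have merely written out the exponent bookkeeping $p'=p(1+\kappa)/\kappa<\infty$ that the paper leaves implicit.
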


Finally, in Theorem~\ref{th:341} we show a modified version of the Kolmogorov $4/3$ law which holds true for the time average of the structure function $S^{\nu}_{0}$ but with the  rate of energy dissipation replaced by  a different quantity. A similar result in the case of the $4/5$ law is proved in Theorem \ref{451}. Theorem~\ref{th:341} is then combined with the Onsager critical regularity of the solutions from \cite{BCCDLS22} to derive an estimate for the exponent of the third order absolute structure function averaged in time as predicted by Kolmogorov's theory. The $p$th order absolute structure function is defined
for any $\ell>0$ and $t\in[0,1]$ as
\begin{align*}
	S_p^\nu(t,\ell)&:=\frac1{4\pi}\int_{\mS^2}\int_{\mT^3}|\delta_{\ell\hat{n}} v_\nu(t)|^p\dif x\dif S(\hat n).
\end{align*}
The Kolmogorov theory predicts the asymptotic behavior
$$S_p^\nu(t,\ell)\thicksim (\eps\ell)^{p/3}$$ for $\ell$ in the inertial range. For $p=3$ this prediction is indeed supported by all the experimental evidence. For $p \neq3$ the experiments  deviate from the Kolmogorov prediction due to intermittency (see \cite{BV19}).

As our last result we  prove that  the solutions obtained in \cite{BCCDLS22} satisfy
$$c\ell\leq \int_{0}^{1}S_3^\nu(t,\ell)\dif t\leq C\ell^{3\alpha},$$
for some $c,C>0$ and $0<\alpha<1/3$, $\ell\in [\ell_D,\ell_I]$.
More precisely, the following result holds and is established in Theorem~\ref{thm:3.1}.

 \bt \label{thm:main2}
Let $\alpha\in (0,1/3)$ be given and let $v_\nu$, $\nu\in(0,1)$, be the  solutions to the forced Navier--Stokes equations \eqref{eq:v} from \cite{BCCDLS22} satisfying
$$
	\sup_{\nu\in (0,1)}\Big(\|v_\nu\|_{L^3(0,1;C^\alpha)}+\|v_\nu\|_{L^\infty(0,1;L^\infty)}\Big)<\infty.
	$$
 Then
  the third order absolute structure function exponents
$$
\zeta_3:=\liminf_{\ell_I\to0}\inf_{\nu\in(0,1)}\inf_{\ell\in[\ell_D,\ell_I]}\frac{\log (\int_0^1S_3^{\nu}(r,\ell)\dif r)}{\log \ell},\qquad\bar\zeta_3:=\limsup_{\ell_I\to0}\liminf_{\nu\to0}\sup_{\ell\in [\ell_D,\ell_I]}\frac{\log (\int_0^1S_3^{\nu}(r,\ell)\dif r)}{\log \ell}.
$$
satisfy
$$
3\alpha\leq{\zeta}_3\leq\bar{\zeta}_3\leq 1.
$$
\et

We  note that the result in Theorem~\ref{thm:main2} holds for the absolute structure function averaged in time which is the natural way to treat nonstationary solutions as discussed above.

\br In fact, Kolmogorov developed  his turbulence theory in the statistically stationary regime, or more generally after taking suitable ensemble/long-time averages. This averaging is a point where ambiguities often arise. Among other reasons,  in experiments the long-time averages are usually taken  as a replacement for ensemble averages, which implicitly assumes the validity of the so-called ergodic hypothesis. In the setting of weak solutions without the energy inequality, i.e., not Leray--Hopf solutions, the ergodic invariant measures are not unique as shown in \cite{HZZ22}. Up to date, it is unknown, whether the energy inequality could select only one unique ergodic invariant measure, and other selection criteria are not available. As a consequence, any attempt of establishing rigorously the validity of the Kolmogorov 1941 theory at stationarity, without ruling out this ambiguity, might well be a hopeless task.
\er

	\section{Kolmogorov $4/5$ and $4/3$ law}\label{s:2}
	We first recall the notion of Leray--Hopf solution to the forced Navier--Stokes equations \eqref{eq:v}.

\bd \label{def:sol} Let $v_{\text{in}}\in L^2(\mT^3)$ be a divergence-free vector field, and $f_{\nu}\in L^1(0,1;L^2)$. A Leray--Hopf solution to the Navier--Stokes system \eqref{eq:v} on $[0,1]\times \mT^3$ with initial data $v_{\text{in}}$ and force $f_\nu$ is a divergence-free vector field $v_\nu\in L^\infty(0,1;L^2)\cap L^2(0,1;H^1)\cap C_w([0,1];L^2)$ such that $v_\nu(0)=v_{\text{in}}$ and for all $t\in [0,1]$ and all $\psi\in C^\infty(\mT^3)$
\begin{align*}
	\<v_\nu(t),\psi\>-\<v_\nu(0),\psi\>=\nu\int_0^t\<v_\nu,\Delta\psi\>\dif r-\int_0^t \<v_\nu\cdot \nabla v_\nu,\psi\>\dif r+\int_0^t \<f_\nu,\psi\>\dif r-\int_0^t\<\nabla p_\nu,\psi\>\dif r,
\end{align*}
  and the  following energy inequality holds true for all $t\in(0,1]$
 \begin{align}\label{energy}\|v_\nu(t)\|_{L^2}^2+2\nu\int_0^t\|\nabla v_\nu\|_{L^2}^2\dif s\leq \|v_{\text{in}}\|_{L^2}^2+2\int_0^t\langle f_\nu,v_\nu\rangle\dif s.
 \end{align}
\ed

The main aim of this section is to prove the $L^{p}$-in-time version of the Kolmogorov $4/5$ law for the forced Navier--Stokes equations \eqref{eq:v}
under the assumption (\textbf{H}).   By interpolation we obtain that for any $p>1$ there exists a $\beta(p)>0$ such that
\begin{align}\label{bd:p}
	\sup_{\nu\in(0,1)}\|v_\nu\|_{L^p(0,1;H^{\beta(p)})}<\infty.\end{align}

In general it is not easy to construct  Leray--Hopf solutions satisfying assumption (\textbf{H}). Recently, such a result was obtained  in \cite{CCS22, BCCDLS22}. In particular, smooth solutions to the forced Navier--Stokes equations were constructed  such that Assumption (\textbf{H})  holds true.
More precisely,  the following result was shown in \cite[Theorem A]{BCCDLS22}.

\bt\label{th:bccds}
For any $\alpha<1/3$ there exist $\sigma>0$, a divergence-free initial data $v_{\text{in}}\in C^\infty(\mT^3;\mR^3)$ with $\int_{\mT^3}v_{\text{in}}=0$ and a family of forces $\{f_\nu\}_{\nu\in [0,1]}\subset C^\infty([0,1]\times \mT^3;\mR^3)$ satisfying \eqref{eq:v} such that
 for each $\nu\in(0,1)$ there exists a unique Leray--Hopf  solution to \eqref{eq:v} with $v^\nu(0)=v_{\text{in}}$ satisfying
 \begin{align}\label{est:alpha}
 	\sup_{\nu\in (0,1)}\Big(\|v_\nu\|_{L^3(0,1;C^\alpha)}+\|v_\nu\|_{L^\infty(0,1;L^\infty)}\Big)<\infty,\end{align}
 and  the following anomalous dissipation holds
 $$\limsup_{\nu\to0}\nu\int_0^1\int_{\mT^3}|\nabla v_\nu|^2\dif x\dif t>\frac14.$$
In particular, \eqref{energy} holds for the solution with inequality replaced by equality. Furthermore, $f_\nu\to f_0$ in $L^{1+\sigma}(0,1;C^\sigma(\mT^3))$ and $v_\nu\to v_0$ in $L^2(0,1;L^2)$ as $\nu\to0$ and $(v_0, f_0)$ satisfies \eqref{eq:v} with $\nu=0$.
\et

Let us now introduce further necessary notations and tools in order to formulate and establish the $L^p$-in-time version of Kolmogorov's $4/5$ and $4/3$ laws.
Define
\begin{align*}
	\Gamma_\nu(t,h):=\int_{\mT^3}v_\nu(t,x)\otimes v_\nu(t,x+h)\dif x,\quad t\in [0,1], h\in\mR^3,
\end{align*}
and for each $k=1,2,3$  the third order structure matrix
\begin{align*}
	D^k_\nu(t,h)=\int_{\mT^3}(\delta_h v_\nu(t,x)\otimes \delta_h v_\nu(t,x))\delta_hv_\nu^k(t,x)\dif x,\quad t\in [0,1], h\in\mR^3,
\end{align*}
where \begin{align*}
	\delta_h v_\nu(x)=v_\nu(x+h)-v_\nu(x),\quad h\in\mR^3.
\end{align*}
Since $v_\nu$ is a Leray-Hopf solution, which stays in $L^3(0,1;L^3)$, $D_\nu^k$ is finite for each fixed $\nu>0$.
As the next step, we derive the K\'arm\'an--Howarth--Monin (KHM) relation  as in \cite[Proposition 3.1]{BCZPSW19}.

\bp\label{prop:31}
Suppose that $v_\nu$, $\nu\in(0,1)$, are Leray--Hopf solutions to the forced Navier--Stokes equations \eqref{eq:v} on $[0,1]\times \mT^3$ with forces $f_\nu\in L^1(0,1;L^2)$, $\nu\in(0,1)$. Let $\eta(h)=(\eta_{ij}(h))_{ij=1}^3$ be a smooth, compactly supported function of the form
\begin{align*}
	\eta(h)=\phi(|h|)I+\varphi(|h|)\hat{h}\otimes \hat{h}, \quad \hat h=\frac{h}{|h|},
\end{align*}
where $\phi$ and $\varphi$ are smooth and compactly supported on $(0,\infty)$. Then it holds that for any $t\in [0,1]$
\begin{align*}
	&\int_{\mR^3}\eta(h):\Gamma_\nu(t,h)\dif h-\int_{\mR^3}\eta(h):\Gamma_\nu(0,h)\dif h\\=&-\frac12\sum_{k=1}^3\int_0^t\int_{\mR^3}\p_k\eta(h):D^k_\nu(r,h)\dif h\dif r+2\nu\int_0^t\int_{\mR^3}\Delta \eta(h):\Gamma_\nu(r,h)\dif h\dif r\\&+ 2\int_0^t\int_{\mR^3}\int_{\mT^3}\eta(h):f_\nu\otimes T_hv_\nu\dif x\dif h\dif r,
\end{align*}
where $T_h v_\nu(t,x)=v_\nu(t,x+h)$.
\ep
\begin{proof}
	The proof follows from the same argument as in \cite[Proposition 3.1]{BCZPSW19}. The only difference is that we do not have the martingale part but have the  extra force $f_\nu$. The idea in \cite[Proposition 3.1]{BCZPSW19} is to consider the evolution of $\int_{\mT^3} v_\nu\otimes T_hv_\nu \dif x$, which gives us the additional  term
	$$\int_{\mT^3}[f_\nu\otimes T_hv_\nu+v_\nu\otimes T_hf_\nu]\dif x.$$
	Testing by the test function $\eta$ we therefore obtain	the desired result by symmetry of $\eta$.
\end{proof}

Now, we prove the $L^p$-in-time version of the $4/3$ law and the $4/5$ law. To this end,  we recall the averaged structure functions from \eqref{eq:s1} and \eqref{eq:s2} for any $\ell>0$ and $t\in[0,1]$
\begin{align*}
	S_0^\nu(t,\ell)&=\frac1{4\pi}\int_{\mS^2}\int_{\mT^3}|\delta_{\ell\hat{n}} v_\nu(t)|^2\delta_{\ell\hat{n}} v_\nu(t)\cdot \hat{n}\dif x\dif S(\hat n),\\
	S_{\|}^\nu(t,\ell)&=\frac1{4\pi}\int_{\mS^2}\int_{\mT^3}(\delta_{\ell\hat{n}} v_\nu(t)\cdot \hat{n})^3\dif x\dif S(\hat{n}).
\end{align*}
Since $v_\nu$ is a Leray-Hopf solution, which stays in $L^3(0,1;L^3)$, $S_0^\nu$ and $S_{\|}^\nu$ are finite for each fixed $\nu>0$.
We define further the spherical average
\begin{align*}
	\bar{\Gamma}_\nu(t,\ell):=\frac1{4\pi} \int_{\mS^2} I:\Gamma_\nu(t,\ell\hat n)\dif S(\hat n),
\end{align*}
and {the energy dissipation rate} $$\eps_\nu(t):=\frac12\|v_\nu(0)\|_{L^2}^2-\frac12\|v_\nu(t)\|_{L^2}^2+\int_0^t\<f_\nu,v_\nu\>\dif r.$$
Note that the energy inequality implies that
$$\eps_\nu(t)\geq \int_0^t\nu\|\nabla v_\nu\|_{L^2}^2\dif r.$$
The equality holds if the energy equality holds. In particular for the solutions in Theorem \ref{th:bccds} we have
$$\limsup_{\nu\to0}\eps_\nu(1)=\limsup_{\nu\to0}\int_0^1\nu\|\nabla v_\nu\|_{L^2}^2\dif r>\frac14,$$
by anomalous dissipation at time $1$.

We have all in hand to  state and prove our first main results.

\bt\label{th:34}
 Let  $v_\nu$, $\nu\in(0,1)$,  be Leray--Hopf solutions to the forced Navier--Stokes equations \eqref{eq:v} with forces $f_\nu$,  $\nu\in(0,1)$, the same initial datum $v_{\text{in}}\in H^\beta$ for some $\beta>0$, and satisfying the assumption {\em(\textbf{H})}. Then  there exists $\ell_D=\nu^K$ with $K<\frac1{2-\alpha q}$  such that for any $p\in[1,\infty)$
\begin{align*}
	\lim_{\ell_I\to0}\limsup_{\nu\to0}\sup_{\ell\in [\ell_D,\ell_I]}\left\|\int_0^\cdot\frac{S_0^\nu(r,\ell)}{\ell}\dif r+\frac43\eps_\nu(\cdot)\right\|_{L^p(0,1)}=0.
\end{align*}
Here  $\eps_\nu(t)=\int_0^t\nu\|\nabla v_\nu\|_{L^2}^2\dif r$ if the energy equality holds.

\et
\begin{proof} It is sufficient to prove
	\begin{align}\label{4/3law}
		\lim_{\ell_I\to0}\limsup_{\nu\to0}\sup_{\ell\in [\ell_D,\ell_I]}\sup_{\|\psi\|_{L^{p'}(0,1)}\leq1}\left|\int_0^1\psi(t)\left[\int_0^t\frac{S_0^\nu(r,\ell)}{\ell}\dif r+\frac43\eps_\nu(t)\right]\dif t\right|=0,
	\end{align}
	 for $\psi\in L^{p'}(0,1)$ with $p'=\frac{p}{p-1}$ and the result follows from duality.
	Choosing $\eta(h)=\phi(|h|)I$ with $\phi$ smooth and compactly supported on $(0,\infty)$ in Proposition \ref{prop:31},  we  obtain on $[0,t]$ for $t\leq1$
\begin{equation}\label{eqkmh2}
	\aligned
	&\int_{\mR^3} \phi(|h|)[\<v_\nu,T_hv_\nu\>(t)-\<v_\nu,T_hv_\nu\>(0)]\dif h+\frac12\sum_{k=1}^3\int_0^t\int_{\mR^3}\p_k\eta(h):D^k_\nu(r,h)\dif h\dif r
	\\&=2\nu\int_0^t\int_{\mR^3}\phi(|h|) \Delta\<v_\nu,T_hv_\nu\>\dif h\dif r+2\int_0^t\int_{\mR^3}\phi(|h|)\<f_\nu,T_hv_\nu\>\dif h\dif r.
\endaligned
\end{equation}
We  define the following spherical averages
$$
J_\nu(t,\ell):=\int_{\mathbb{S}^{2}} \<v_\nu,T_{\ell \hat n}v_\nu\>(t)\dif S(\hat n), \quad t\in [0,1],\ell\geq0,
$$
\begin{align*}
	\bar{f}_\nu(t,\ell):=\frac1{4\pi}\int_{\mS^2}\int_{\mT^3}\< f_\nu, T_{\ell \hat n}v_\nu\>\dif x\dif S(\hat n),\quad t\in [0,1], \ell\geq0.
\end{align*}
By \cite[Lemma 2.8]{BCZPSW19} we know that $\ell\to\int_0^t \bar{\Gamma}_\nu(r,\ell)\dif r$ is in $C^2$. Hence, we could write \eqref{eqkmh2} as
\begin{align*}
		&\int_{\mR^+}\ell^2\phi(\ell)\left(\int_0^t\Big(\nu \partial^2_\ell\bar\Gamma_\nu(r,\ell)+\nu \frac2\ell\partial_\ell\bar\Gamma_\nu(r,\ell)+\bar f_\nu(r,\ell)\Big)\dif r+\frac1{8\pi}(J_\nu(0,\ell)-J_\nu(t,\ell))\right)\dif \ell
		\\&=\frac14\int_{\mR^+}\int_0^tS_0^\nu(r,\ell)\ell^2\dif r\phi'(\ell)\dif \ell.
	\end{align*}
	Using integration by parts we find
	\begin{align*}
		& \p_\ell\Big(\ell^3\int_0^t\frac{S_0^\nu(r,\ell)}{\ell}\dif r\Big)
		\\&=-\ell^2\left(\int_0^t(4\nu \partial^2_\ell\bar\Gamma_\nu(r,\ell)+4\nu \frac2\ell\partial_\ell\bar\Gamma_\nu(r,\ell)+4\bar f_\nu(r,\ell))\dif r+\frac1{2\pi}(J_\nu(0,\ell)-J_\nu(t,\ell))\right).
	\end{align*}
	Taking integration with respect to $\ell$ we get since the boundary term vanishes
	\begin{align*}
		&\int_0^t\frac{S_0^\nu(r,\ell)}{\ell}\dif r
		\\&=-\frac1{\ell^3}\int_0^\ell\tau^2\left(\int_0^t\Big(4\nu\partial_\ell^2 \bar\Gamma_\nu(r,\tau)+4\nu \frac2\tau\partial_\ell\bar\Gamma_\nu(r,\tau)+4\bar f_\nu(r,\tau)\Big)\dif r+\frac1{2\pi}(J_\nu(0,\tau)-J_\nu(t,\tau))\right)\dif \tau.
	\end{align*}
	Moreover, a direct calculation yields
	\begin{align*}
		\frac1{\ell^3}\int_0^t\int_0^\ell\tau^2\Big(4\nu \partial^2_\tau\bar\Gamma_\nu(r,\tau)+4\nu \frac2\tau\partial_\tau\bar\Gamma_\nu(r,\tau)\Big)\dif \tau\dif r=\int_0^t\frac{4\nu\partial_\ell\bar\Gamma_\nu(r,\ell)}{\ell}\dif r.
	\end{align*}
	The right hand side can be estimated as
  in \cite{GK23}:
			\begin{align}\label{eq:p}
			&\left|\int_0^t\frac{4\nu\partial_\ell\bar\Gamma_\nu(r,\ell)}{\ell}\dif r\right|
			\\=&\left|	\int_0^t\frac{\nu}{\pi\ell}\int \sum_{i,j} (v_i(t,x)-v_i(t,x+\ell\hat n))\partial_j v_i(t,x+\ell\hat{n}) \hat n_j\dif x\dif S(\hat n) \dif r\right|\no
			\\&\lesssim \frac{\nu}\ell \int_0^t\|\nabla v_\nu\|_{L^2}\| \delta_{\ell\hat n} v_\nu\|_{L^2} \dif r\no
			\lesssim \frac{\nu^{1/2}}\ell\left( \int_0^t\| \delta_{\ell\hat n} v_\nu\|_{L^2}^2\right)^{1/2}\left(\nu \int_0^t\|\nabla v_\nu\|_{L^2}^2\right)^{1/2}.\no
		\end{align}
	Here, we used $\int v_i(t,x+\ell\hat n))\partial_j v_i(t,x+\ell\hat{n})\dif x=0$. Then, by the regularity of $v_\nu\in L^2(0,1;H^{\alpha q/2})$ the above is bounded by
	$$C\nu^{1/2}\ell^{\alpha q/2-1},$$
	which converges to zero under our condition on $\ell_D$.
Since the solutions are Leray--Hopf and satisfy Assumption (\textbf{H}) it holds
\begin{align*}
	\frac12\sup_{\nu\in (0,1)}\|v_\nu(t)\|_{L^2}^2+\sup_{\nu\in (0,1)}\nu \int_0^t\|\nabla v_\nu\|_{L^2}^2\leq \|v_{\text{in}}\|_{L^2}^2+2\sup_{\nu\in(0,1)}\int_0^t\<f_\nu,v_\nu\>\dif r<\infty,
\end{align*}
which combined with Assumption  (\textbf{H}) implies that for $\ell_D\geq \nu^{1/(2-\alpha q)-\kappa}$ with $\kappa>0$
$$\lim_{{\ell_I}\to0}\limsup_{\nu\to0}\sup_{\ell\in[\ell_D,\ell_I]}\sup_{t\in[0,1]}\left|\int_0^t\frac{4\nu\partial_\ell\bar\Gamma_\nu(r,\ell)}{\ell}\dif r\right|=0.$$
Hence we arrive at for $t\in [0,1]$
\begin{align}\label{eq:S0}
	\int_0^t\frac{S^{\nu}_0(r,\ell)}{\ell}dr=-\frac1{\ell^3}\int_0^\ell\tau^2\int_0^t4\bar f_\nu(r,\tau)\dif r\dif \tau+\frac1{2\pi\ell^3}\int_0^\ell \tau^2[J_\nu(t,\tau)-J_\nu(0,\tau)]\dif\tau+O(\nu^{\kappa}).
\end{align}
Now, we multiply both sides by a  function $\psi\in L^{p'}(0,1)$ and  integrate over $t$ to obtain
\begin{equation}\label{eq:S01}
	\aligned
	\int_0^1\psi(t)\int_0^t\frac{S^{\nu}_0(r,\ell)}{\ell}\dif r\dif t=&-\frac1{\ell^3}\int_0^\ell\tau^2\int_0^1\int_0^t4\bar f_\nu(r,\tau)\dif r \psi(t)\dif t\dif \tau\\&+\frac1{2\pi\ell^3}\int_0^\ell \tau^2\int_0^1[J_\nu(t,\tau)-J_\nu(0,\tau)]\psi(t)\dif\tau \dif t+O(\nu^{\kappa}).
	\endaligned
\end{equation}
In the following we want to replace $\bar f_\nu(r,\tau), J_\nu(t,\tau)$ and $J_\nu(0,\tau)$ by $\bar f_\nu(r,0), J_\nu(t,0)$ and $J_\nu(0,0)$, respectively.
By the regularity of the initial data we know that
$$\lim_{\ell\to0}\frac1{2\pi\ell^3}\int_0^\ell \tau^2[J_\nu(0,\tau)-J_\nu(0,0)]\dif\tau=0,$$ as well as
$$\frac1{2\pi\ell^3}\int_0^\ell \tau^2J_\nu(0,0)\dif\tau =\frac23\|v_{\text{in}}\|_{L^2}^2.$$
Moreover, for $J_\nu$ at time $t$ we use Assumption (\textbf{H}) and \eqref{bd:p} to have
\begin{equation}\label{*}
	\aligned
	&
	\left|\frac1{2\pi\ell^3}\int_0^1\int_0^\ell \tau^2[J_\nu(t,\tau)-J_\nu(t,0)]\dif \tau \psi(t) \dif t\right|\\
&\lesssim\frac1{\ell^3}\int_0^\ell \tau^2\int_{\mathbb{S}^{2}}\int_0^1 \|v_\nu(t)\|_{L^2}\|T_{\tau \hat n}v_\nu(t)-v_\nu(t)\|_{L^2}|\psi(t)|\dif t\dif S(\hat n)\dif\tau\\
&\lesssim\frac1{\ell^3}\int_0^\ell \tau^{2+\beta(p)}\int_0^1 \|v_\nu(t)\|_{L^2}\|v_\nu(t)\|_{H^{\beta(p)}}|\psi(t)|\dif t\dif \tau\\
&\lesssim\frac1{\ell^3}\int_0^\ell \tau^{2+\beta(p)}\Big(\int_0^1 \|v_\nu(t)\|^p_{H^{\beta(p)}}\dif t\Big)^{1/p}\Big(\int_0^1|\psi(t)|^{p'}\dif t\Big)^{1/p'}\dif\tau\rightarrow0, \quad \ell\to0.
\endaligned
\end{equation}
Here, the  convergence holds uniformly in $\nu$. Hence, we could replace $J_\nu(t,\tau)$ by $J_\nu(0,\tau)$ in \eqref{eq:S01}.

 In addition, using uniform in $\nu$ Sobolev regularity of $v_\nu$ from Assumption (\textbf{H}) we  have for $p_0$ satisfying $\frac1{p_0}+\frac1{1+\sigma}=1$ and $\beta_0=\beta(p_0)$ from \eqref{bd:p}
	\begin{align*}
		\sup_{\nu\in (0,1)}\int_0^t|\bar{f}_\nu(r,{\tau})-\bar{f}_\nu(r,0)|\dif r&\lesssim {\tau}^{\beta_0} \sup_{\nu\in (0,1)}\int_0^t\|f_\nu\|_{L^2}\|v_\nu\|_{H^{\beta_0}}\dif r
\\&\lesssim {\tau}^{\beta_0}\left(\int_0^t\|f_\nu\|_{L^2}^{1+\sigma}\dif r\right)^{1/(1+\sigma)}\left(\int_0^t\|v_\nu\|^{p_0}_{H^{\beta_0}}\dif r\right)^{1/p_0}\lesssim  {\tau}^{\beta_0},
	\end{align*}
which implies that
	\begin{align*}
		\frac4{\ell^3}\int_0^t\int_0^\ell\tau^2\bar f_\nu(r,\tau)\dif \tau\dif r&=\frac43\int_0^t\bar f_\nu(r,0)\dif r+\frac4{\ell^3}\int_0^\ell\tau^2\int_0^t(\bar f_\nu(r,\tau)-\bar f_\nu(r,0))\dif r\dif \tau
\\&\to \frac43\int_0^t\bar f_\nu(r,0)\dif r=\frac43\int_0^t\<f_\nu,v_\nu\>\dif r,\quad \ell\to 0.
	\end{align*}
	Also here the  convergence holds uniformly in $\nu$.

Combining the above calculations we finally deduce
\begin{align*}
	\lim_{{\ell_I}\to0}\limsup_{\nu\to0}\sup_{\ell\in[\ell_D,\ell_I]}\sup_{\|\psi\|_{L^{p'}(0,1)}\leq1}\bigg[&\int_0^1\psi(t)\bigg(\int_0^t\frac{S^{\nu}_0(r,\ell)}{\ell}\dif r\\&-\frac23\bigg[\|v_\nu(t)\|_{L^2}^2-\|v_\nu(0)\|_{L^2}^2-2\int_0^t\<f_\nu,v_\nu\>\dif r\bigg]\bigg)\dif t\bigg]=0,
\end{align*}
which implies \eqref{4/3law}.
\end{proof}

\bt\label{45}
Let $v_\nu$, $\nu\in (0,1)$, be Leray--Hopf solutions to the forced Navier--Stokes equations \eqref{eq:v} with forces $f_\nu$, $\nu\in (0,1)$, the same initial datum $v_{\text{in}}\in H^\beta $ for some $\beta>0$. Then  there exists $\ell_D=\nu^K$ with $K<\frac1{2-\alpha q}$ such that  for any $p\in[1,\infty)$
\begin{align*}
	\lim_{\ell_I\to0}\limsup_{\nu\to0}\sup_{\ell\in [\ell_D,\ell_I]}\left\|\int_0^\cdot\frac{S_{\|}^\nu(r,\ell)}{\ell}\dif r+\frac45\eps_\nu(\cdot)\right\|_{L^p(0,1)}=0.
\end{align*}
Here $\eps_\nu(t)=\int_0^t\nu\|\nabla v_\nu\|_{L^2}^2\dif r$ if the energy equality holds.
\et

\begin{proof} It suffices to prove that
	\begin{align*}
		\lim_{\ell_I\to0}\limsup_{\nu\to0}\sup_{\ell\in [\ell_D,\ell_I]}\sup_{\|\psi\|_{L^{p'}(0,1)}\leq1}
		\left|\int_0^1\psi(t)\left[\int_0^t\frac{S_{\|}^\nu(r,\ell)}{\ell}\dif r+\frac45\eps_\nu(t)\right]\dif t\right|=0,
	\end{align*}
for $p'=\frac{p}{p-1}$.
	Choosing $\eta(h)=\varphi(|h|)\hat h\otimes \hat h, \hat h=\frac{h}{|h|}$ in Proposition \ref{prop:31}, we obtain on $[0,t]$ with $t\in[0,1]$
	\begin{equation}\label{eq:bl}
		\aligned
		&\int_{\mR^3}\int_{\mT^3} (\varphi(|h|)\hat h\otimes \hat h):[(v_\nu\otimes T_hv_\nu)(t)-(v_\nu\otimes T_hv_\nu)(0)]\dif x\dif h
		\\&\quad+\sum_{k=1}^3\int_0^t\int_{\mR^3}\p_{h_k}(\varphi(|h|)\hat h\otimes \hat h):\Big(\frac12 D^k_\nu(r,h)+2\nu\p_{h_k}\Gamma_\nu(r,h)\Big)\dif h\dif r
		\\&=2\int_0^t\int_{\mR^3}\int_{\mT^3}\varphi(|h|)\hat h\otimes \hat h:f_\nu\otimes T_h v_\nu\dif x\dif h\dif r.
		\endaligned
	\end{equation}
Let
\begin{align*}&
G_\nu(t,\ell):=\int_{\mT^3}\int_{\mathbb{S}^{2}} \hat n\otimes \hat n:[v_\nu\otimes T_{\ell \hat n}v_\nu(t)]\dif S(\hat n)\dif x.
\end{align*}
The first term in \eqref{eq:bl} is just
\begin{align*}
\int_{\mR^{+}}\varphi(\ell)\ell^2	(G_\nu(t,\ell)-G_\nu(0,\ell))\dif \ell.
\end{align*}
	Using the same calculation as in \cite[Section 5]{BCZPSW19} we could write the second term of the left hand side  of \eqref{eq:bl} as
	\begin{align*}
		2\pi \int_0^t\int_{\mR^+}\Big(\ell^2\varphi'(\ell)-2\ell \varphi(\ell)\Big)(S_{\|}^\nu(r,\ell)+4\nu H_\nu(r,\ell))\dif \ell\dif r+4\pi\int_0^t\int_{\mR^+}\ell\varphi(\ell)S_0^\nu(r,\ell)\dif \ell\dif r,
	\end{align*}
	where
	\begin{align*}
		H_\nu(t,\ell):=\frac1{4\pi} \int_{\mS^2}\int_{\mT^3}(\hat n\cdot v_\nu)(\hat n\otimes \hat n:T_{\hat n\ell}\nabla v_\nu)(t)\dif x\dif S(\hat n).
	\end{align*}
	We consider the right hand side of \eqref{eq:bl} and write it as
	\begin{align*}
		8\pi\int_0^t\int_{\mR^+}\ell^2\varphi(\ell)\widetilde f_\nu(r,\ell)\dif \ell,
	\end{align*}
	with
	\begin{align*}
		\widetilde f_\nu(t,\ell):=	\frac1{4\pi}\int_{\mS^2}\int_{\mT^3}\hat n\otimes \hat n:[f_\nu\otimes T_{\ell \hat n} v_\nu](t)\dif x\dif S(\hat n).
	\end{align*}
	Substituting these calculations into \eqref{eq:bl} and using the fact that $\ell^2\varphi'(\ell)-2\ell\varphi(\ell)=\ell^4(\ell^{-2}\varphi(\ell))'$, we arrive at
	\begin{align*}
		&\p_\ell\left[\int_0^t\ell^4(S_{\|}^\nu(r,\ell)+4\nu H_\nu(r,\ell))\dif r\right]-2\ell^3\int_0^tS_0^\nu(r,\ell)\dif r+4\ell^4\int_0^t\widetilde{f}_\nu(r,\ell)\dif r\\
		&\qquad=\frac{\ell^4}{2\pi}(G_\nu(t,\ell)-G_\nu(0,\ell)).
	\end{align*}
	Integrating with respect to $\ell$ on both sides leads to
	\begin{equation}\label{eq:S}
		\aligned
		&\int_0^t\ell^4\big(S_{\|}^\nu(r,\ell)+4\nu H_\nu(r,\ell)\big)\dif r\\&=2\int_0^t\int_0^\ell\tau^3S_0^\nu(r,\tau)\dif \tau\dif r-4\int_0^t\int_0^\ell\tau^4\widetilde{f}_\nu(r,\tau)\dif \tau\dif r+\frac1{2\pi}\int_0^\ell \tau^4(G_\nu(t,\tau)-G_\nu(0,\tau))\dif \tau,
		\endaligned
	\end{equation}
	where the  boundary term vanishes since $v_\nu$ satisfies energy inequality.

	Dividing both sides of \eqref{eq:S} by $\ell^5$ we obtain
	\begin{align}\label{eq:S1}
		\int_0^t\frac{S_{\|}^\nu(r,\ell)}{\ell}\dif r=&-4\nu\int_0^t\frac{ H_\nu(r,\ell)}{\ell}\dif r+2\ell^{-5}\int_0^t\int_0^\ell\tau^3S_0^\nu(r,\tau)\dif \tau\dif r
		\\&-4\ell^{-5}\int_0^t\int_0^\ell\tau^4\widetilde{f}_\nu(r,\tau)\dif \tau\dif r+\frac1{2\pi\ell^5}\int_0^\ell \tau^4(G_\nu(t,\tau)-G_\nu(0,\tau))\dif \tau.\no
	\end{align}
In the following we consider each term on the right hand side of \eqref{eq:S1}.	By definition of $H_\nu$ we have
		similarly as above
		\begin{align*}
			&\left|\int_0^t\frac{\nu H_\nu(r,\ell)}{\ell}\dif r\right|
			\\&=\left|	\int_0^t\frac{\nu}{4\pi\ell}\int \sum_{j} (v\cdot \hat n(t,x)-v\cdot \hat n(t,x+\ell\hat n))\partial_j (v \cdot \hat n)(t,x+\ell\hat{n}) \hat n_j\dif x\dif S(\hat n) \dif r\right|
			\\&\lesssim \frac{\nu}\ell \int_0^t\|\nabla v_\nu\|_{L^2}\| \delta_{\ell\hat n} v_\nu\|_{L^2} \dif r
			\lesssim \frac{\nu^{1/2}}\ell\left( \int_0^t\| \delta_{\ell\hat n} v_\nu\|_{L^2}^2\right)^{1/2}\left(\nu \int_0^t\|\nabla v_\nu\|_{L^2}^2\right)^{1/2},
		\end{align*}
		where we used $\int v\cdot \hat n(t,x+\ell\hat n))\partial_j (v\cdot \hat n)(t,x+\ell\hat{n})\dif x=0.$ By the regularity of $v_\nu\in L^2(0,1;H^{\alpha q/2})$, the above is bounded by
		$$C\nu^{1/2}\ell^{\alpha q/2-1},$$
		which converges to zero under our  condition on $\ell_D$.
	Using Assumption (\textbf{H}) we  have for the same  $p_0,\beta_0$ as in the proof of Theorem \ref{th:34}
\begin{align*}
		\sup_{\nu\in (0,1)}\int_0^t|\widetilde{f}_\nu(r,{\tau})-\widetilde{f}_\nu(r,0)|\dif r\lesssim {\tau}^{\beta_0} \sup_{\nu\in (0,1)}\int_0^t\|v_\nu\|_{H^{\beta_0}}\|f_\nu\|_{L^2}\dif r\lesssim {\tau}^{\beta_0} .
	\end{align*}
	Hence, using
	$$\widetilde{f}_\nu(r,0)=\frac13\<f_\nu,v_\nu\>(r),$$ we obtain
	\begin{align*}
	&	4\ell^{-5}\int_0^t\int_0^\ell\tau^4\widetilde{f}_\nu({r,\tau})\dif \tau\dif r
		\\&=\frac4{15}\int_0^t\<f_\nu,v_\nu\>(r)\dif r+4\ell^{-5}\int_0^t\int_0^{\ell}\tau^4(\widetilde{f}_\nu(r,\tau)-\widetilde{f}_\nu(r,0))\dif \tau\dif r
		=\frac4{15}\int_0^t\<f_\nu,v_\nu\>\dif r+{O}(\ell^{{\beta_0}}).
	\end{align*}
By the regularity of the initial data, we obtain
	\begin{align*}&\lim_{\ell\to0}\limsup_{\nu\to0}\sup_{\ell\in[\ell_D,\ell_I]}\frac1{\ell^5}\int_0^\ell \tau^4[G_{{\nu}}(0,\tau)-G_{{\nu}}(0,0)]\dif\tau =0.
\end{align*}
By the same argument as in the calculation in \eqref{*} we obtain
\begin{align*}&\lim_{\ell\to0}\limsup_{\nu\to0}\sup_{\ell\in[\ell_D,\ell_I]}\sup_{\|\psi\|_{L^{p'}(0,1)}\leq1}\frac1{\ell^5}\int_0^1 \psi(t)\int_0^\ell \tau^4[G_{{\nu}}(t,\tau)-G_{{\nu}}(t,0)]\dif\tau \dif t=0.
\end{align*}
Hence, using $G_{{\nu}}(t,0)=\frac{4\pi}3\|v_\nu(t)\|_{L^2}^2$ and testing with a function $\psi\in L^{p'}$ as before we obtain
	\begin{align}\label{eq:S10}
	\int_0^1\psi(t)\int_0^t\frac{S_{\|}^\nu(r,\ell)}{\ell}\dif r\dif t&=2\ell^{-5}\int_0^1\psi(t)\int_0^t\int_0^\ell\tau^3S_0^\nu(r,\tau)\dif \tau\dif r\dif t-\frac{4}{15}\int_0^1\psi(t)\int_0^t\<f_\nu,v_\nu\>\dif r\dif t
	\\&\quad+\frac{2}{15}\int_0^1\psi(t)(\|v_\nu(t)\|_{L^2}^2-\|v_\nu(0)\|^2_{L^2})\dif t+{O}(\nu^{
	{\kappa}})+{O}(\ell^{{\beta_0}}).\no
\end{align}
Using \eqref{eq:p} and \eqref{eq:S0} in the proof of Theorem \ref{th:34} we have
	\begin{align*}
		&	\lim_{\ell_I\to0}\limsup_{\nu\to0}\sup_{\ell\in [\ell_D,\ell_I]}\sup_{\|\psi\|_{L^{p'}(0,1)}\leq1}\left|\int_0^1\psi(t)\left[\int_0^t2\ell^{-5}\int_0^\ell \tau^3 S_0^\nu(r,\tau)\dif \tau\dif r+\frac45\times \frac23\varepsilon_\nu(t)\right]\dif t\right|=0
		.\end{align*}
	Combining the above calculations the  result follows.
\end{proof}

Combining Theorem~\ref{th:34}, Theorem~\ref{45} and Theorem \ref{th:bccds} we obtain the following corollary.

\bc\label{cor:2.6}
 Suppose that $v_\nu$, $\nu\in (0,1)$, are the solutions to the forced Navier--Stokes equations \eqref{eq:v} from Theorem \ref{th:bccds}. Then there exists $\ell_D=\nu^K$ with $K<\frac1{2-\alpha q}$ and for any {$p\in [1,\infty)$}
	\begin{align*}
	\lim_{\ell_I\to0}\limsup_{\nu\to0}\sup_{\ell\in [\ell_D,\ell_I]}\left\|\int_0^\cdot\frac{S_{0}^\nu(r,\ell)}{\ell}\dif r+\frac43\nu\int_0^\cdot\|\nabla v_\nu\|_{L^2}^2\dif r \right\|_{L^p(0,1)}=0,
\end{align*}
and
	\begin{align*}
		\lim_{\ell_I\to0}\limsup_{\nu\to0}\sup_{\ell\in [\ell_D,\ell_I]}\left\|\int_0^\cdot\frac{S_{\|}^\nu(r,\ell)}{\ell}\dif r+\frac45\nu\int_0^\cdot\|\nabla v_\nu\|_{L^2}^2\dif r \right\|_{L^p(0,1)}=0.
	\end{align*}
\ec

To conclude this section, we discuss the proof of Theorem~\ref{th:1.2}.

\begin{proof}[Proof of  Theorem~\ref{th:1.2}]
The result follows from 
Theorem~\ref{th:34} and Theorem~\ref{45} and H\"older's inequality since  by definition
$$
\left\langle\left|\int_0^\mathfrak{t}\frac{S_{\|}^\nu(r,\ell)}{\ell}\dif r+\frac45\eps_\nu(\mathfrak{t})\right|^{p}\right\rangle = \int_{0}^{1}\psi(t)\left|\int_0^{t}\frac{S_{\|}^\nu(r,\ell)}{\ell}\dif r+\frac45\eps_\nu({t})\right|^{p}\dif t,
$$
where $\psi\in L^{1+\kappa}(0,1)$, for some $\kappa>0$, is the density of the law of the random time $\mathfrak{t}$ with respect to the Lebesgue measure and it holds $\|\psi\|_{L^{1+\kappa}(0,1)}\leq K$. A similar expression holds true in the case of the $4/3$ law.
\end{proof}

\section{Third order absolute structure function exponent}
\label{s:3}

In order to establish Theorem~\ref{thm:main2} via Theorem~\ref{thm:3.1} below, we first derive a modified version of the Kolmogorov $4/3$ law for the time average of the structure function $S^{\nu}_{0}$. Note that compared to the $L^{p}$-in-time version in Theorem~\ref{th:34}, here the rate of energy dissipation is replaced by a related but $\ell$-dependent  quantity. A similar result holds also true for the $4/5$ law as showed in Theorem~\ref{451}.

\bt\label{th:341}
Let  $v_\nu$, $\nu\in(0,1)$,  be Leray--Hopf solutions to the forced Navier--Stokes equations \eqref{eq:v} with forces $f_\nu$,  $\nu\in(0,1)$, the same initial datum $v_{\text{in}}\in H^\beta $ {for some $\beta>0$}
and satisfying the assumption {\em(\textbf{H})}. Then  there exists $\ell_D=\nu^K$ with $K<\frac1{2-\alpha q}$ such that
\begin{align}\label{eq:k}
	\lim_{\ell_I\to0}\limsup_{\nu\to0}\sup_{\ell\in [\ell_D,\ell_I]}\left|\int_0^1\frac{S_0^\nu(r,\ell)}{\ell}\dif r+\frac43\eps^\ell_\nu\right|=0,
\end{align}
where
\begin{align}\label{eq:k3}
\eps^\ell_\nu=\frac12\|v_\nu(0)\|_{L^2}^2-\frac{3}{8\pi\ell^3}\int_0^\ell\tau^2{\int_{\mathbb{S}^{2}}} \<v_\nu,T_{\tau\hat n}v_\nu\>(1)\dif S(\hat n) {\dif\tau}+\int_0^1\<f_\nu,v_\nu\>\dif r.
\end{align}
{Moreover, for the solutions from Theorem~\ref{th:bccds} it holds }
\begin{align}\label{eq:k2}
\limsup_{\nu\to0}\inf_{\ell\in (0,1)}\eps^\ell_\nu>\frac14.
\end{align}
\et

\begin{proof}
 The proof of \eqref{eq:k} follows from \eqref{eq:S0} by letting $t=1$ and keeping only  the term $$\frac1{2\pi\ell^3}\int_0^\ell\tau^2J_\nu(1,\tau)\dif \tau,
 $$
 while the other terms are handled by the same argument as in the proof of Theorem \ref{th:34}. In order to prove \eqref{eq:k2}, we note  that $$\<v_\nu,T_{\tau\hat n}v_\nu\>\leq \|v_\nu\|^2_{L^2},$$
which implies
\begin{align*}	
\eps^\ell_\nu
	&\geq\frac12\|v_\nu(0)\|_{L^2}^2-\frac{3}{8\pi\ell^3}\int_0^\ell\tau^2 {\int_{\mathbb{S}^{2}}}\|v_\nu(1)\|^2_{L^2}\dif S(\hat n){\dif\tau}+\int_0^1\<f_\nu,v_\nu\>\dif r
	\\&=\frac12\|v_\nu(0)\|_{L^2}^2- \frac12\|v_\nu(1)\|^2_{L^2}+\int_0^1\<f_\nu,v_\nu\>\dif r.
\end{align*}
Thus, for the solution obtained in Theorem \ref{th:bccds} it holds
$$
\limsup_{\nu\to0}\inf_{\ell\in (0,1)}\eps^\ell_\nu>\frac14.
$$
\end{proof}

{With Theorem~\ref{th:341} at hand, we are able to prove Theorem~\ref{thm:main2}. To this end, we recall that}
for any $\ell>0$  the third order absolute  structure function is given by
\begin{align*}
	S_3^\nu(t,\ell)&=\frac1{4\pi}\int_{\mS^2}\int_{\mT^3}|\delta_{\ell\hat{n}} v_\nu(t)|^3\dif x\dif S(\hat n).
\end{align*}
and the third order absolute structure function exponents read as
$$
\zeta_3:=\liminf_{\ell_I\to0}\inf_{\nu\in(0,1)}\inf_{\ell\in[\ell_D,\ell_I]}\frac{\log (\int_0^1S_3^{\nu}(r,\ell)\dif r)}{\log \ell},\qquad\bar\zeta_3:=\limsup_{\ell_I\to0}\liminf_{\nu\to0}\sup_{\ell\in [\ell_D,\ell_I]}\frac{\log (\int_0^1S_3^{\nu}(r,\ell)\dif r)}{\log \ell}.
$$
The result then reads as follows.

 \bt \label{thm:3.1}
 Suppose that $v_\nu$, $\nu\in(0,1)$, are the solutions to the forced Navier--Stokes equations \eqref{eq:v} obtained in Theorem~\ref{th:bccds} so that \eqref{est:alpha} holds for some $\alpha>0$.
 Then  $$3\alpha\leq{\zeta}_3\leq\bar{\zeta}_3\leq 1.
$$
\et

Here we note that Theorem \ref{th:bccds} holds for $\alpha<1/3$   arbitrary close to $1/3$.

\begin{proof}[Proof of Theorem \ref{thm:3.1}]
By Theorem \ref{th:341} we obtain
$$
\liminf_{\ell_I\to0}\limsup_{\nu\to0}\inf_{\ell\in [\ell_D,\ell_I]}\frac{|\int_0^{1} S_{0}^{\nu}(r,\ell)\dif r|}{\ell}>\frac18,
$$
Moreover, by
\begin{align*} \frac{\int_0^1 S_3^{\nu}(r,\ell)\dif r}{\ell} \geq\frac{|\int_0^{1} S_{0}^{\nu}(r,\ell)\dif r|}{\ell},
\end{align*}
we find that for any sequence $\ell_I^k\to0$ as $k\to\infty$  there exists $\nu_k\to0$ such that for any $\ell\in [\ell_D(\nu_k),\ell_I^k]$
\begin{align*} \int_0^1 S_3^{\nu}(r,\ell)\dif r \geq\frac18\ell.
\end{align*}
Taking logarithm on both sides and {dividing by} $\log \ell$,
we obtain $\bar\zeta_3\leq1$.

Finally, by the uniform in $\nu$ regularity of $v_{\nu}$ in $L^3(0,1;C^\alpha)$ we  obtain  for $\ell\in(0,1)$
\begin{align*} \sup_{\nu\in (0,1)}\int_0^1 S_3^\nu(r,\ell)\dif r\leq  C\ell^{3\alpha},
\end{align*}
which implies that $\zeta_3\geq3\alpha$. Hence  the result follows.
\end{proof}

Finally, we note that from the proof of Theorem \ref{45} we also obtain the following modified version of Kolmogorov $4/5$ law.

\bt\label{451}
Let  $v_\nu$, $\nu\in(0,1)$,  be Leray--Hopf solutions to the forced Navier--Stokes equations \eqref{eq:v} with forces $f_\nu$,  $\nu\in(0,1)$, the same initial datum $v_{\text{in}}\in H^\beta $ for some $\beta>0$
and satisfying the assumption {\em(\textbf{H})}. Then  there exists $\ell_D=\nu^K$ with $K<\frac1{2-\alpha q}$ such that
\begin{align*}
	\lim_{\ell_I\to0}\limsup_{\nu\to0}\sup_{\ell\in [\ell_D,\ell_I]}\left|\int_0^1\frac{S_{\|}^\nu(r,\ell)}{\ell}\dif r+\frac4{15}\bar\eps^\ell_\nu+
	\frac{8}{15}\tilde{\eps}_\nu^\ell\right|=0,
\end{align*}
where {$\tilde{\eps}_\nu^\ell=5\ell^{-5}\int_0^\ell\tau^4\eps_\nu^\tau\dif \tau$
with $\varepsilon^{\ell}_{\nu}$ was defined in \eqref{eq:k3} and } $$\bar\eps^\ell_\nu=\frac12\|v_\nu(0)\|_{L^2}^2-\frac{15}{8\pi\ell^5}\int_0^\ell\tau^4\int_{\mathbb{S}^{2}} \int_{\mT^3} \hat n\otimes \hat n: v_\nu\otimes T_{\tau\hat n}v_\nu(1)\dif x\dif S(\hat n)\dif\tau+\int_0^1\<f_\nu,v_\nu\>\dif r.$$
Moreover, for the solutions from Theorem~\ref{th:bccds} it holds
$$\limsup_{\nu\to0}\inf_{\ell\in (0,1)}\bar\eps^\ell_\nu>\frac18,\qquad \limsup_{\nu\to0}\inf_{\ell\in (0,1)}\tilde\eps^\ell_\nu>1/4.$$
\et
\begin{proof} The proof follows from \eqref{eq:S1} by taking $t=1$ and including the term  $\frac1{2\pi}\int_0^\ell\tau^4G_\nu(1,\tau)\dif \tau$ into $\bar\eps^\ell_\nu$.
The other terms follow the same argument as in the proof of Theorem \ref{45} and Theorem~\ref{th:341}.
Now, we prove the last result. Note that
$$\int_{\mathbb{S}^{2}}\int_{\mT^3} \hat n\otimes \hat n: v_\nu\otimes T_{\tau\hat n}v_\nu(1)\dif x\dif S(\hat n)\leq 4\pi\|v_\nu(1)\|^2_{L^2},$$
which implies that
\begin{align*}	\bar{\eps}^\ell_\nu
	&\geq\frac12\|v_\nu(0)\|_{L^2}^2- \frac32\|v_\nu(1)\|^2_{L^2}+\int_0^1\<f_\nu,v_\nu\>\dif r.
\end{align*}
Thus we obtain for the solutions obtained in Theorem \ref{th:bccds} uniformly in $\ell$
 $$\frac12\|v_\nu(0)\|_{L^2}^2- \frac32\|v_\nu(1)\|^2_{L^2}+\int_0^1\<f_\nu,v_\nu\>\dif r=\nu\int_0^1\|\nabla v_\nu\|_{L^2}^2-\|v_\nu(1)\|^2_{L^2}>\frac14-a_0^{\epsilon\delta/2},$$ where in the last step we use the notation $a_0^{\epsilon\delta/2}$ from \cite{CCS22} and the estimate on page 41-43 in \cite{CCS22} together with the fact that $u(1)=0$ in the notation of \cite{CCS22} to have $\|v_\nu(1)\|_{L^2}^2\leq a_0^{\epsilon\delta/2}$.
Then by choosing $a_0$ small enough we obtain
$$\limsup_{\nu\to0}\inf_{\ell\in (0,1)}\bar\eps^\ell_\nu>1/8.$$
\end{proof}

\end{document}